\theoremstyle{plain}
\begin{document}

\def\a{\alpha}
 \def\b{\beta}
 \def\e{\epsilon}
 \def\d{\delta}
  \def\D{\Delta}
 \def\c{\chi}
 \def\k{\kappa}
 \def\g{\gamma}
 \def\Ind{\mathrm{Ind}}
 \def\t{\tau}
\def\ti{\tilde}
 \def\N{\mathbb N}
 \def\Q{\mathbb Q}
 \def\Z{\mathbb Z}
 \def\C{\mathbb C}
 \def\F{\mathbb F}
 \def\ovF{\overline\F}
 \def\bfN{\mathbf N}
 \def\bfC{\mathbf C}
 \def\bfO{\mathbf O}
 \def\bfZ{\mathbf Z}
 \def\cG{\mathcal G}
 \def\cT{\mathcal T}
 \def\cX{\mathcal X}
 \def\cY{\mathcal Y}
 \def\cC{\mathcal C}
 \def\cD{\mathcal D}
 \def\cZ{\mathcal Z}
 \def\cO{\mathcal O}
 \def\cW{\mathcal W}
 \def\cL{\mathcal L}
 \def\bfC{\mathbf C}
 \def\bfZ{\mathbf Z}
 \def\bfO{\mathbf O}
 \def\G{\Gamma}
 \def\bO{\boldsymbol{\Omega}}
 \def\bgo{\boldsymbol{\omega}}
 \def\go{\rightarrow}
 \def\do{\downarrow}
 \def\ra{\rangle}
 \def\la{\langle}
 \def\fix{{\rm fix}}
 \def\ind{{\rm ind}}
 \def\rfix{{\rm rfix}}
 \def\diam{{\rm diam}}
 \def\uni{{\rm uni}}
 \def\diag{{\rm diag}}
 \def\Irr{{\rm Irr}}
 \def\Syl{{\rm Syl}}
 \def\Out{{\rm Out}}
 \def\Tr{{\rm Tr}}
 \def\M{{\cal M}}
 \def\cE{{\mathcal E}}
\def\td{\tilde\delta}
\def\tx{\tilde\xi}
\def\DC{D^\circ}
\def\ext{{\rm Ext}}
\def\res{{\rm Res}}
\def\Ker{{\rm Ker}}
\def\hom{{\rm Hom}}
\def\End{{\rm End}}
 \def\rank{{\rm rank}}
 \def\soc{{\rm soc}}
 \def\Cl{{\rm Cl}}
 \def\A{{\sf A}}
 \def\sP{{\sf P}}
 \def\sQ{{\sf Q}}
 \def\SSS{{\sf S}}
  \def\SQ{{\SSS^2}}
 \def\St{{\sf {St}}}
 \def\p{\ell}
 \def\ps{\ell^*}
 \def\SC{{\rm sc}}
 \def\supp{{\sf{supp}}}
  \def\cR{{\mathcal R}}
 \newcommand{\tw}[1]{{}^#1}

\def\Der{{\rm Der}}
 \def\Sym{{\rm Sym}}
 \def\PSL{{\rm PSL}}
 \def\SL{{\rm SL}}
 \def\Sp{{\rm Sp}}
 \def\GL{{\rm GL}}
 \def\SU{{\rm SU}}
 \def\GU{{\rm GU}}
 \def\SO{{\rm SO}}
 \def\PO{{\rm P}\Omega}
 \def\Spin{{\rm Spin}}
 \def\PSp{{\rm PSp}}
 \def\PSU{{\rm PSU}}
 \def\PGL{{\rm PGL}}
 \def\PGU{{\rm PGU}}
 \def\Iso{{\rm Iso}}
 \def\Stab{{\rm Stab}}
 \def\GO{{\rm GO}}
 \def\Ext{{\rm Ext}}
 \def\E{{\cal E}}
 \def\l{\lambda}
 \def\ve{\varepsilon}
 \def\Lie{\rm Lie}
 \def\s{\sigma}
 \def\O{\Omega}
 \def\o{\omega}
 \def\ot{\otimes}
 \def\op{\oplus}
 \def\oc{\overline{\chi}}
 \newcommand{\edit}[1]{{\color{red} #1}}
 \def\pf{\noindent {\bf Proof.$\;$ }}
 \def\Proof{{\it Proof. }$\;\;$}
 \def\no{\noindent}
\def\hal{\unskip\nobreak\hfil\penalty50\hskip10pt\hbox{}\nobreak
 \hfill\vrule height 5pt width 6pt depth 1pt\par\vskip 2mm}

 \renewcommand{\thefootnote}{}

\newtheorem{theorem}{Theorem}
 \newtheorem{thm}{Theorem}[section]
 \newtheorem{prop}[thm]{Proposition}
 \newtheorem{conj}[thm]{Conjecture}
 \newtheorem{question}[thm]{Question}
 \newtheorem{lem}[thm]{Lemma}
 \newtheorem{lemma}[thm]{Lemma}
 \newtheorem{defn}[thm]{Definition}
 \newtheorem{cor}[thm]{Corollary}
 \newtheorem{coroll}[theorem]{Corollary}
\newtheorem*{corB}{Corollary}
 \newtheorem{rem}[thm]{Remark}
 \newtheorem{exa}[thm]{Example}
 \newtheorem{cla}[thm]{Claim}

\numberwithin{equation}{section}
\parskip 1mm

\title{Sectional rank and Cohomology II}

\author[Guralnick]{Robert M. Guralnick}
\address{R.M. Guralnick, Department of Mathematics, University of Southern California, Los Angeles,
CA 90089-2532, USA}
\email{guralnic@usc.edu}
 
\author[Tiep]{Pham Huu Tiep}
\address{P. H. Tiep, Department of Mathematics, Rutgers University, Piscataway, NJ 08854, USA}
\email{tiep@math.rutgers.edu}

 
\date{\today}

\thanks{The first author was partially supported by the NSF
grant DMS-1901595.
The second author was partially supported by the NSF grant DMS-1840702 and the Joshua Barlaz Chair in Mathematics.}


\begin{abstract}
We investigate bounds of $\dim H^m(G,V)$ for $G$ a finite group
of bounded sectional $p$-rank and $V$ irreducible $G$-module over an algebraically closed
field of characteristic $p$.
\end{abstract}

\maketitle



\section{Introduction}

Let $G$ be a finite group, $p$ a prime and $k$ an algebraically closed field of characteristic $p$.
Donovan's conjecture (cf. \cite{K}) asserts that for a fixed $p$-group $D$, 
there are only finitely many
blocks $B$  of any group algebra $kG$ with defect group $D$ up to Morita equivalence.

A trivial consequence of this conjecture is that there is
a bound on the  dimension of $\Ext$-groups between irreducible
modules (depending only on the defect group of the block
containing the irreducibles and the degree).   We investigate bounds
under a weaker hypothesis. 

Recall that the {\it sectional $p$-rank} $s_p(G)$ of a finite group $G$ is the maximal rank 
of an elementary abelian group isomorphic to $L/K$ for some subgroups $K \lhd L$ of $G$.
In this paper we obtain results in cohomology that depend only the sectional $p$-rank of $G$
and improve the results from \cite{GT2}. 
See also \cite{MR} for some recent results about the number of simple modules in a block
in terms of the sectional $p$-rank of the defect group.

 More precisely we obtain some bounds on $\dim \Ext_G^m(D,V)$ under various assumptions, 
 for any irreducible $kG$-module $V$ and for any $kG$-module $D$.
The best results are for $m=1$ or when $G$ is $p$-solvable.   Our arguments and results are an improvement on those
from \cite{GT2}.  Also, we critically use a recent result of Symonds \cite{sym} that implies that there is
a bound on $\dim H^m(G,k)$ depending only on $m$ and $s_p(G)$.    We will use this via
the following:

\begin{thm} \label{symonds} {\rm \cite{sym}}  Let $p$ be a prime, $k$ an algebraically closed field of
characteristic $p$, and let $m$ and $s$ be any natural numbers.
Then there is a constant $C=C(m, s)$, such that $\dim H^m(G,V) \le C(\dim V)$
for any finite group $G$ with 
$s_p(G) \le s$ and  $kG$-module $V$.
\end{thm}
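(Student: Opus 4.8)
The plan is to reduce the general statement to the single case of a finite $p$-group with trivial coefficients, where the cited bound on $\dim H^m(P,k)$ can be applied. There are two elementary reductions, followed by one genuinely deep input.

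First I would restrict to a Sylow $p$-subgroup. Let $P \in \Syl_p(G)$. Since the composite $\mathrm{cor}^G_P \circ \res^G_P$ of restriction and transfer on $H^m(-,V)$ is multiplication by $[G:P]$, which is invertible in $k$, the restriction map $\res^G_P \colon H^m(G,V) \to H^m(P, \res^G_P V)$ is injective. Hence $\dim H^m(G,V) \le \dim H^m(P, \res^G_P V)$, while $\dim \res^G_P V = \dim V$. Moreover $s_p(P) = s_p(G)$: any elementary abelian $p$-section $L/K$ of $G$ is, being a $p$-group, a section of a Sylow $p$-subgroup of $L$ and hence of a conjugate of $P$, giving $s_p(G) \le s_p(P)$; the reverse inequality is clear since $P \le G$. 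So it suffices to bound $\dim H^m(P,W) \le C(m,s)\dim W$ for every $kP$-module $W$ and every finite $p$-group $P$ with $s_p(P) \le s$.

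Next I would pass from arbitrary coefficients $W$ to the trivial module. Because $P$ is a $p$-group and $\mathrm{char}\,k = p$, the algebra $kP$ is local with residue field $k$, so $k$ admits a minimal free resolution $\cdots \to F_1 \to F_0 \to k \to 0$ with $F_n \cong (kP)^{\beta_n}$, where minimality forces $\beta_n = \dim_k \Ext^n_{kP}(k,k) = \dim_k H^n(P,k)$. Computing $H^m(P,W) = \Ext^m_{kP}(k,W)$ from this resolution and using $\hom_{kP}(kP,W) \cong W$ yields
\[
\dim_k H^m(P,W) \le \dim_k \hom_{kP}(F_m, W) = \beta_m \dim_k W = \dim_k H^m(P,k)\cdot \dim_k W.
\]
Combining the two reductions, $\dim H^m(G,V) \le \dim H^m(P,k)\cdot \dim V$.

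It then remains to bound $\beta_m = \dim H^m(P,k)$ by a constant depending only on $m$ and $s = s_p(P)$, uniformly over all such $P$. The hard part will be precisely this uniformity. For a single $P$ the cohomology ring $H^*(P,k)$ is a finitely generated graded $k$-algebra whose Krull dimension equals the $p$-rank of $P$ (Quillen), and the $p$-rank is at most $s_p(P) = s$; but controlling the individual graded pieces $H^m(P,k)$ by a bound independent of $P$ requires more than finite generation and the correct Krull dimension. This is exactly the content of Symonds's theorem: the Castelnuovo--Mumford regularity bound for group cohomology delivers a uniform polynomial-type bound on the Betti numbers $\dim H^m(P,k)$ in terms of $m$ and the (sectional) $p$-rank. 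Granting that input, one takes $C(m,s)$ to be the resulting bound and the two reductions above finish the proof.
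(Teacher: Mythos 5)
Your proposal is correct and follows essentially the same route the paper takes: the paper's justification is precisely "Symonds proved the case of a $p$-group with trivial coefficients; restrict to a Sylow $p$-subgroup," and you have simply filled in the two standard reductions (injectivity of restriction to $P\in\Syl_p(G)$ via transfer, and the passage from $k$ to arbitrary coefficients, e.g.\ via a minimal free resolution over the local algebra $kP$, or equivalently via a composition series of $W$ with all factors trivial) together with the same citation of Symonds for the uniform bound on $\dim H^m(P,k)$.
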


Symonds \cite{sym}  proved this result for $G$ a $p$-group and $V=k$.
By restricting to a Sylow $p$-subgroup, it is clear that this implies the statement above.  In fact, Symonds gives an explicit bound on $C$ (which does not depend on $p$).   

Let us call a finite group $G$  {\it weakly $p$-solvable} if every composition factor of $G$ has cyclic Sylow $p$-subgroups.

Our first main result is the following:

\begin{thm} \label{main:p-solvable}  
There exists a function $f: \N^3 \to \N$ such that the following statement holds.
Let $p$ be a prime, $k$ an algebraically closed field of
characteristic $p$, $m$ and $s$ natural numbers and $G$ a finite weakly $p$-solvable group with 
$s_p(G) \le s$.    Let $D$ and $V$ be  $kG$-modules with $V$ irreducible and $\dim D \le d$.
Then there is a constant $C \leq f(m, s, d)$
such that $\dim \Ext_G^m(D,V) \le C$.
\end{thm}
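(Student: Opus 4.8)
The plan is to induct on $|G|$, after enlarging the class of finite groups to include twisted group algebras $k_\alpha[G]$ (equivalently, blocks of central extensions $\hat G$ with $\hat G/Z \cong G$ and $Z \le k^\times$ a $p'$-group, so that $s_p$ and weak $p$-solvability are preserved); this enlargement is forced on us by the Fong--Reynolds reductions below. Three remarks drive everything. First, $\Ext_G^m(D,V) \cong H^m(G,\hom_k(D,V))$, and the long exact sequence in $\Ext_G^m(-,V)$ attached to a composition series of $D$ reduces us, at the cost of a factor $\dim D\le d$, to the case where $D=S$ is irreducible with $\dim S \le d$; it therefore suffices to bound $\dim \Ext_G^m(S,V)$ with $S$ irreducible of dimension $\le d$ and $V$ irreducible of \emph{arbitrary} dimension. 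Second, weak $p$-solvability and the bound $s_p \le s$ pass to subgroups and quotients: a composition factor of $H \le G$ is a composition factor of a subgroup of a composition factor of $G$ (intersect a composition series of $G$ with $H$), and subgroups and sections of a cyclic group are cyclic. Third --- the mechanism that removes the dependence on $\dim V$ --- each reduction step turns the large irreducible $V$ into an irreducible module (over a smaller group or twisted group algebra) again allowed to be of arbitrary dimension, while the ``small'' argument $S$ only grows by bounded factors; the inductive hypothesis absorbs the former and controls the latter.

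Step 1 (removing $O_{p'}(G)$). Let $N = O_{p'}(G) \neq 1$ and let $W$ be an irreducible $kN$-constituent of $V|_N$ with inertia group $T = I_G(W)$. By Clifford theory $V \cong \Ind_T^G \tilde V$ for an irreducible $kT$-module $\tilde V$ lying over $W$, so Eckmann--Shapiro gives $\Ext_G^m(S,V) \cong \Ext_T^m(\res_T S, \tilde V)$ with $\dim \res_T S \le d$ and $\tilde V$ irreducible. If $T < G$ we finish by induction, since $T$ is weakly $p$-solvable with $s_p(T) \le s$. If $T = G$, then $W$ is $G$-invariant and a Fong--Reynolds/Morita reduction identifies the block of $V$ with a block of a twisted group algebra $k_\alpha[G/N]$, under which $V$ corresponds to an irreducible $k_\alpha[G/N]$-module of dimension equal to the multiplicity $e$ (possibly large) and $S$ to a module of dimension $\le d$; since $|G/N| < |G|$, induction applies and absorbs $e$. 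Thus we may assume $O_{p'}(G) = 1$.

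Step 2 (removing $O_p(G)$). Let $N = O_p(G) \neq 1$. As $S$ and $V$ are irreducible in characteristic $p$, $N$ acts trivially on both, so $\hom_k(S,V)|_N$ is a trivial $kN$-module. The Lyndon--Hochschild--Serre spectral sequence gives $\dim \Ext_G^m(S,V) \le \sum_{i+j=m} \dim H^i\!\big(G/N,\, H^j(N,k)\otimes \hom_k(S,V)\big)$, where $G/N$ acts on $H^j(N,k)$ and on $\hom_k(S,V) = S^*\otimes V$. Now $\dim H^j(N,k) \le C(j,s)$ by Symonds (Theorem \ref{symonds}) applied to the $p$-group $N$ of sectional $p$-rank $\le s$, and $H^i(G/N, H^j(N,k)\otimes S^*\otimes V) \cong \Ext_{G/N}^i\big((H^j(N,k)\otimes S^*)^*, V\big)$ has first argument of dimension $\le C(j,s)\,d$, bounded in terms of $m,s,d$. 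Since $|G/N| < |G|$ and $V$ is still irreducible, induction applies. Hence we may also assume $O_p(G) = 1$, so that $F^*(G) = E(G)$ is a central product of components with $C_G(E(G)) = Z(E(G))$; as $O_p(G)=O_{p'}(G)=1$, the $p'$-group $Z(E(G)) \lhd G$ is trivial, so $E(G) = L_1 \times \cdots \times L_t$ with the $L_i$ nonabelian simple, $C_G(E(G)) = 1$, and $G \hookrightarrow \mathrm{Aut}(E(G))$.

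Step 3 (the layer, main obstacle). By weak $p$-solvability each $L_i$ has cyclic Sylow $p$-subgroup; since $O_{p'}(G)=1$ forces $p \mid |L_i|$ for all $i$, each $L_i$ contributes to the $p$-rank and $t \le s$. Clifford theory over $E(G)$ and Eckmann--Shapiro reduce us to the stabilizer of a $G$-invariant external tensor $W = W_1 \boxtimes \cdots \boxtimes W_t$; the Lyndon--Hochschild--Serre sequence for $E(G) \lhd G$ together with the Künneth formula then expresses the relevant coefficient module through the groups $H^{j_i}(L_i, \hom_k(S_i,W_i)) = \Ext_{L_i}^{j_i}(S_i,W_i)$, while the multiplicity of $W$ in $V|_{E(G)}$ is carried by an irreducible $k_\alpha[G/E(G)]$-module $\bar V$ that is handled, exactly as in Step 1, by induction. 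This leaves the genuine base case: a uniform bound, independent of the irreducible $V_i$, for $\dim \Ext_{A}^{j}(S_i, V_i)$ where $L_i \le A \le \mathrm{Aut}(L_i)$ (up to a $p'$-central extension), $L_i$ is simple with cyclic Sylow $p$-subgroup, and $\dim S_i \le d$. I expect this to be the hard part. It is exactly here that cyclic Sylow $p$-subgroups are used: the relevant blocks of $A$ have cyclic defect groups, so Brauer-tree theory makes their minimal resolutions periodic and forces $\dim \Ext_A^{j}(S_i, V_i)$ to be bounded by a constant depending only on $j$ and on the defect (hence on $s$), uniformly in $V_i$. Assembling these bounds through the Künneth and spectral-sequence estimates, applying Symonds on $G/E(G)$ to the resulting bounded-dimensional coefficient, and invoking the classification of finite simple groups with cyclic Sylow $p$-subgroups to make the base-case bound effective, yields a bound of the required shape $f(m,s,d)$ and closes the induction.
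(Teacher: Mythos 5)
Your overall architecture is the right one and largely mirrors the paper's: reduce $D$ to an irreducible of dimension $\le d$, strip $\bfO_{p'}(G)$ by Clifford theory, strip $\bfO_p(G)$ by Lyndon--Hochschild--Serre plus Symonds, and land on the layer $E(G)=L_1\times\cdots\times L_t$ with $t\le s$, where the cyclic-Sylow hypothesis enters exactly as you say (the paper simply quotes that one may take $C(p,s,m)=1$ for simple groups with cyclic Sylow $p$-subgroups, which is the Brauer-tree fact you invoke). However, there is a genuine gap in your induction scheme. Your Step 2 is not an exact reduction: it replaces $\dim\Ext^m_G(S,V)$ by $\dim\Ext^m_{G/N}(S,V)$ \emph{plus} error terms $\sum_{i<m}\dim H^i(G/N,H^{m-i}(N,k)\otimes S^*\otimes V)$, each bounded (by the outer induction on $m$) by a fixed positive constant $E=E(m,s,d)$. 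So the inductive hypothesis ``$\dim\Ext^m_{G'}\le f(m,s,d')$ for all $|G'|<|G|$'' yields only $\dim\Ext^m_G(S,V)\le f(m,s,d)+E$, and the induction on $|G|$ does not close. Unwinding the recursion, the bound you actually obtain is $(\text{base case})+L\cdot E$, where $L$ is the number of times Step 2 is invoked along the recursion; nothing in your argument bounds $L$ in terms of $(m,s,d)$. Indeed Step 2 can recur each time a passage to an inertia subgroup $T=I_G(W)<G$ or to a quotient $G/\bfO_{p'}(G)$ produces a fresh nontrivial $\bfO_p$, and the number of such alternations is a priori controlled only by $\log|G|$ (bounding it uniformly would require a Hall--Higman-type statement for groups of bounded sectional $p$-rank, including across the inertia-subgroup passages, which you do not supply).

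The paper closes this loop differently: it inducts on $m$ and then on $d$, not on $|G|$. Its analogue of your Fong--Reynolds step is a tensor factorization $V\cong W\otimes X$, $D\cong W\otimes Y$ over the $\bfO_{p'}$-homogeneous constituent $W$, followed by $W^*\otimes W=k\oplus U$ with $\bfO_{p'}(G)$ fixed-point-free on $U$; when $\dim W>1$ this strictly decreases $d$, so each round of reductions (one Shapiro, one Frattini, one $\bfO_p$-removal, one factorization) is repeated at most $\log_2 d$ times, and the spectral-sequence error terms are accumulated only boundedly often. If you restructure your Steps 1--2 so that the quantity that decreases is $d$ (or show that your recursion invokes Step 2 at most $g(m,s,d)$ times), the rest of your argument goes through; you would also need to note that in your Step 3 the almost simple overgroups $L_i\le A\le{\rm Aut}(L_i)$ need not themselves have cyclic Sylow $p$-subgroups, so the Brauer-tree bound applies only to $L_i$ and must be fed through a further Lyndon--Hochschild--Serre step over the solvable group $A/L_i$ (this is what the paper's Lemma \ref{boundedmode} and case (C) accomplish).
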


Note that the bound $f(m,s,d)$ in the previous theorem does not depend on $p$.
We also prove the following Theorem \ref{thm:reduction} which reduces the problem  in general to obtaining
bounds for cohomology for simple groups of bounded sectional $p$-rank.   There is such
a bound (depending upon $p$) for $H^1$, see \cite[Theorem 1.4]{GT} which combine the main results of \cite{CPS} and \cite{GT} 
(we conjecture that there exists an upper bound that does not depend on $p$).
 
We first extend \cite[Corollary 1.2(ii)]{GT2}.

\begin{thm} \label{main:ext1}  
There exists a function $g: \N^3 \to \N$ such that the following statement holds.
Let $p$ be a prime, $k$ an algebraically closed field of
characteristic $p$ and $G$ a finite group with 
$s_p(G) \le s$.    Let $D$ and $V$ be  $kG$-modules with $V$ irreducible and $\dim D \le d$.
Then there is a constant $C\leq g(p, s, d)$ such that $\dim \Ext_G^1(D,V) \le C$.
\end{thm}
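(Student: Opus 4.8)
The plan is to prove Theorem \ref{main:ext1} by a reduction argument that bounds $\dim \Ext_G^1(D,V)$ in terms of cohomology of a normal subgroup together with cohomology of simple sections, exploiting the fact that $\Ext^1$ is especially tractable. Without loss I would reduce to the case $D = W$ is irreducible (since $\dim D \le d$ means $D$ has at most $d$ composition factors, and a long exact sequence in $\Ext^1$ from a composition series lets me bound $\dim\Ext^1_G(D,V)$ by $d$ times the maximum over irreducible constituents $W$ of $D$ of $\dim\Ext^1_G(W,V)$). Thus it suffices to bound $\dim\Ext^1_G(W,V) = \dim H^1(G, W^* \otimes V)$ for irreducible $W,V$, where $W^* \otimes V$ has dimension at most $d \cdot \dim V$; but since $W^*\otimes V$ need not be irreducible I will instead work directly with $\Ext^1$ between irreducibles and track the dimension bound $d$ on $W$.

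The heart of the argument is a filtration of $G$ by the generalized Fitting-type layers or, more simply, an induction along a minimal normal subgroup $N \lhd G$. The key tool is the Lyndon--Hochschild--Serre (LHS) five-term exact sequence
\begin{equation*}
0 \to H^1(G/N, V^N) \to H^1(G, V) \to H^1(N, V)^{G/N} \to H^2(G/N, V^N),
\end{equation*}
applied to the module $M = W^* \otimes V$ (or handled via $\Ext^1$ directly), which reduces bounding $H^1(G,M)$ to bounding $H^1(N,M)$ as a $G/N$-module together with low-degree cohomology of the quotient $G/N$. Since $s_p(G/N) \le s_p(G) \le s$ and $s_p(N) \le s$, both factors retain the sectional rank hypothesis, so I can set up an induction on $|G|$. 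The fixed-point term $H^1(N,M)^{G/N}$ is controlled because $H^1(N,M)$ is a $G/N$-module whose dimension I can bound by the inductive hypothesis (or by Theorem \ref{symonds} when the relevant module is trivial-like after passing to the right section), and taking $G/N$-invariants only decreases dimension. The term $H^2(G/N, V^N)$ is where Symonds' Theorem \ref{symonds} enters decisively, giving a bound of the form $C(2,s)\cdot \dim(V^N)$ that depends only on $s$ (not on $p$) — although for $\Ext^1$ one typically only needs $H^1$ and $H^2$ of quotients, which is exactly the range where explicit bounds are available.

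For the base of the induction, when $G$ is close to simple, I would invoke the existing $H^1$-bound for simple groups of bounded sectional $p$-rank: by \cite[Theorem 1.4]{GT} (combining \cite{CPS} and \cite{GT}) there is a bound on $\dim H^1$ for simple groups depending on $p$ and $s$. This is precisely why the function $g$ in the statement is allowed to depend on $p$ (unlike $f$ in Theorem \ref{main:p-solvable}, where cyclic Sylow $p$-subgroups of composition factors force the simple-section cohomology to be bounded independently of $p$). Concretely, when $N$ is a product of nonabelian simple groups, I would pass to one simple factor $S$ and bound $\dim\Ext^1_S$ via that simple-group input, then control the action of $G/N$ permuting the factors combinatorially in terms of $s$.

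The main obstacle I anticipate is \emph{not} the cohomology of the simple sections (that is handled by the cited $H^1$-bounds) but rather controlling the multiplicity and dimension of the relevant $G/N$-modules as one descends the normal filtration, so that the recursive bound does not blow up uncontrollably with the number of layers. I would address this by bounding the number of distinct layers and the dimensions arising at each stage uniformly in terms of $s$ and $d$ — using that the sectional $p$-rank bounds the size of each elementary abelian section and hence restricts the structure of both $N$ and $G/N$ — and by arranging the recursion so that at each step the dimension parameter $d$ and degree $m=1$ stay fixed while only $|G|$ strictly decreases. Packaging all the constants produced along the way into a single function $g(p,s,d)$ is then a bookkeeping exercise, and the fact that only $H^1$ and $H^2$ of quotients appear (rather than arbitrarily high-degree cohomology) is what keeps this manageable for $m=1$, in contrast to the general-$m$ case.
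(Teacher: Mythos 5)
Your proposal has the right ingredients in outline (inflation--restriction along a normal subgroup, induction, the $H^1$ bound for simple groups of bounded sectional $p$-rank from \cite{GT}, and Symonds), and your reduction of $D$ to an irreducible module via a composition series matches the paper. But the step you yourself flag as the ``main obstacle'' is exactly where the argument fails as written, and the proposed fix does not work. In the five-term sequence applied to $M=D^*\otimes V$, the term you must bound is $H^1(N,M)^{G/N}$, and before taking invariants $\dim H^1(N,M)$ is \emph{not} bounded in terms of $p,s,d$: writing $V|_N\cong e\bigoplus_i U_i$ by Clifford's theorem, the number of summands $et$ grows with $\dim V$, which is unbounded, so summing the inductive bounds over the constituents of $M|_N$ gives nothing. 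Taking $G/N$-invariants rescues you only if the multiplicity space is an irreducible $G/N$-module (so that Lemma \ref{trivial} applies), and arranging that is precisely the content of the paper's reductions that your proposal omits: reduce to $V$ \emph{primitive} via Shapiro's lemma (not just irreducible), normalize a Sylow $p$-subgroup of the kernel by the Frattini argument so the kernel becomes a $p'$-group, pass to a $p'$-cover and tensor-decompose $V\cong W\otimes X$, $D\cong W\otimes Y$ over $\bfO_{p'}(G)$ (and later over $E(G)\bfO_{p'}(G)$, using Lemma \ref{tensor}, Lemma \ref{coh2ext} with Landazuri--Seitz--Zalesskii, the K\"unneth formula, and the Schreier conjecture via Lemma \ref{boundedmode}). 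These are not bookkeeping; without them the recursion does not close.

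A second, more localized error: you invoke Symonds to bound $H^2(G/N,V^N)$ by a constant ``depending only on $s$,'' but Theorem \ref{symonds} gives $C(2,s)\dim(V^N)$ and $\dim V^N$ can be as large as $\dim V$, which is unbounded. Fortunately that term is the \emph{codomain} of the transgression in the five-term sequence, so it is not needed for an upper bound on $H^1(G,M)$ at all; the correct inequality is $\dim H^1(G,M)\le \dim H^1(G/N,M^N)+\dim H^1(N,M)^{G/N}$ (a special case of Lemma \ref{holt}). For comparison, the paper obtains Theorem \ref{main:ext1} as the $m=1$ instance of the general reduction Theorem \ref{thm:reduction}, whose hypothesis on simple groups is supplied for $m=1$ by \cite[Theorem 1.4]{GT}; the proof of that reduction is a double induction on $(m,d)$ built on the special cases of bounded $|G|$, $p$-solvable $G$, and bounded $|G/R(G)|$, glued together by exactly the Clifford-theoretic steps described above.
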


If we consider higher degree, we show that theorem reduces to the case of simple groups.  

\begin{thm}  \label{thm:reduction}  Let $p$ be a prime, $k$ an algebraically closed field of
characteristic $p$, and let $m$, $s$ be positive integers.
Assume there exists a constant $C=C(p,s,m)$ such that 
$\dim H^j(S,W) \leq C$ for any $1 \leq j \le m$, any finite simple group $S$ of sectional $p$-rank at most 
$s$, and any irreducible $kS$-module $W$.
Then there exists a function $h:\N^4 \to \N$ such that the following statement holds.
For any finite group $G$ with $s_p(G) \le s$, any $d \in \N$, and    
any $kG$-modules $V$ and $D$ with $V$ irreducible and $\dim D \le d$, we have 
$$\dim \Ext_G^m(D,V) \le h(p,s,d,m).$$
 \end{thm}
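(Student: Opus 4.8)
The plan is to induct on $|G|$, reducing at each stage to a proper section of $G$ together with modules of the same shape (one module of bounded dimension paired with an irreducible module), and to feed the simple-group hypothesis in through the minimal normal subgroups. First I would dispose of $D$: filtering $D$ by $kG$-submodules with irreducible quotients and using the long exact sequences in $\Ext_G^*(-,V)$ shows $\dim\Ext_G^m(D,V)\le\sum_i\dim\Ext_G^m(D_i,V)$ over the at most $d$ composition factors $D_i$, so it suffices to bound $\dim\Ext_G^m(D',V)$ for $D'$ irreducible with $\dim D'\le d$. Fix a minimal normal subgroup $N\lhd G$ and recall $\Ext_G^m(D,V)=H^m(G,\hom_k(D,V))$. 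The Lyndon--Hochschild--Serre spectral sequence $E_2^{i,j}=H^i(G/N,H^j(N,\hom_k(D,V)))\Rightarrow\Ext_G^{i+j}(D,V)$ gives $\dim\Ext_G^m(D,V)\le\sum_{i+j=m}\dim E_2^{i,j}$, so everything reduces to bounding each term $H^i(G/N,H^j(N,\hom_k(D,V)))$ with $0\le i,j\le m$.

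Second, I would control the Clifford-theoretic data. By Clifford's theorem $V|_N$ is isotypic on each $G$-orbit of homogeneous components; passing to the inertia group $H=\Stab_G(W)$ of an irreducible constituent $W$ of $V|_N$ and writing $V=\Ind_H^G U$, Shapiro's lemma gives $\Ext_G^m(D,V)\cong\Ext_H^m(\res^G_H D,U)$ with $\dim\res^G_H D=\dim D\le d$, which removes the number of conjugate components from the bookkeeping. One may therefore assume $V|_N=e\cdot W$ is isotypic with $W$ an $H$-invariant irreducible $kN$-module. The key structural point is that, as a $k[H/N]$-module, $H^j(N,\hom_k(D,V))\cong\Ext_N^j(D,W)\otimes M$, where $\Ext_N^j(D,W)$ will be shown to have bounded dimension and $M$ is the (projective) irreducible $H/N$-module of dimension $e$ accounting for the multiplicity. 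Thus each spectral sequence term is $H^i(H/N,\,\Ext_N^j(D,W)\otimes M)=\Ext_{H/N}^i(\Ext_N^j(D,W)^*,M)$ with $\dim\Ext_N^j(D,W)$ bounded and $M$ irreducible: this is exactly the original problem for the strictly smaller group $H/N$, so the induction closes (working, if necessary, over a twisted group algebra of $H/N$, which preserves both $s_p\le s$ and the simplicity structure of the composition factors).

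Third, the bound on $\dim\Ext_N^j(D,W)$ is obtained from the type of the minimal normal subgroup $N$, for $0\le j\le m$. If $N$ is a $p'$-group then $\Ext_N^j$ vanishes for $j>0$ and $\dim\hom_N(D,W)\le d$. If $N$ is an elementary abelian $p$-group then, in characteristic $p$, $W=k$ is the only irreducible, and $\dim H^j(N,\hom_k(D,k))$ is bounded in terms of $d$, $j$ and $\rank(N)\le s$ by the cohomology of elementary abelian groups (equivalently by Theorem~\ref{symonds}). If $N=S^r$ with $S$ a nonabelian simple group, then $s_p(S)\le s$ and $r\le s$, and the K\"unneth formula expresses $H^j(S^r,Z)$ for irreducible $Z=Z_1\boxtimes\cdots\boxtimes Z_r$ through the groups $H^{j_i}(S,Z_i)$, which are bounded by the standing hypothesis; decomposing $\hom_k(D,W)=D^*\otimes W$ into composition factors then bounds $\dim\Ext_N^j(D,W)$.

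The main obstacle is exactly this last reduction over the nonabelian layer: the hypothesis bounds $H^j(S,W)=\Ext_S^j(k,W)$ for irreducible $W$, whereas decomposing $D^*\otimes W$ confronts one with $\Ext_S^j(X,W)$ for irreducible $X$ of dimension $\le d$ that need not be trivial. Bounding these uniformly requires showing that the number of composition factors of $D^*\otimes W$ (equivalently of $X^*\otimes W$) is bounded by a function of $\dim D\le d$ alone, independently of $\dim W$; granting this, each composition factor contributes a bounded $H^j(S^r,\cdot)$ and the sum is controlled. Establishing this composition-factor count, together with the careful propagation of the ``bounded $D$, irreducible $V$'' shape through the spectral sequence and the projective-representation bookkeeping in the isotypic reduction (notably passing between $S$ and its covers for the twisted group algebras), is where the real work lies; the base case of the induction (e.g.\ $G$ simple or $G=F^*(G)$) is then handled directly by the hypothesis and the elementary and abelian estimates above.
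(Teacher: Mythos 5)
Your proposal assembles the right toolkit (LHS spectral sequence, Clifford theory plus Shapiro, K\"unneth over the layer, reduction to the simple-group hypothesis), but it has two genuine gaps, one of which you flag yourself and one of which you do not.

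The first gap is the one you name at the end: bounding $\Ext^j_S(X,W)$ (equivalently $H^j(S^r, Z)$ for the composition factors $Z$ of $D^*\otimes W$) when the irreducible constituent $X$ of $D$ restricted to a component is nontrivial. You propose to control the number of composition factors of $X^*\otimes W$ by a function of $d$ alone, but you do not prove this, and as stated it is not the right dichotomy. The paper resolves this point differently and more cheaply (Lemma \ref{coh2ext}): among quasisimple groups of sectional $p$-rank at most $s$, all but a bounded (in $d,s$) number have no nontrivial irreducible $k$-representation of dimension $\le d$ --- this uses the Landazuri--Seitz--Zalesskii bounds for cross-characteristic groups of Lie type, the bound $n\le\max(d+2,9)$ for alternating covers, and the fact that sporadic groups and equicharacteristic groups of Lie type of bounded sectional rank form a bounded list. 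For the non-exceptional groups $D$ is forced to be trivial, so $\dim\Ext^m_G(D,V)=d\cdot\dim H^m(G,V)$ and no composition-factor count is needed; the exceptions are finitely many groups and contribute a bounded amount. Without some such argument your third step does not close.

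The second gap is structural and you do not address it: inducting on $|G|$ and stripping one minimal normal subgroup at a time does not obviously produce a bound independent of $G$. Each application of the spectral sequence contributes additively (from the terms $E_2^{i,j}$ with $j\ge 1$) and multiplicatively (the new ``$D$'' is $\Ext^j_N(D,W)^*$, whose dimension can exceed $d$, e.g.\ already for $N$ elementary abelian and $j\ge 1$), so unrolling the recursion over a chief series gives a bound that grows with the composition length of $G$, which is unbounded among groups with $s_p(G)\le s$. Making this converge requires either a Hall--Higman-type bound on the number of chief factors of order divisible by $p$, or --- as the paper does --- a different induction: induct on $m$ first and on $d$ second, perform a \emph{bounded} number of structural reductions (Frattini argument to make the kernel of $V$ a $p'$-group, one passage to an inertia subgroup, centralization of $\bfO_{p'}(G)$), and then apply the spectral sequence once with the single normal subgroup $N=E(G)\bfO_{p'}(G)$, whose quotient is bounded modulo a solvable group (Lemma \ref{boundedmode} via Schreier), the latter case being handled separately and collapsing completely after the reductions (a faithful primitive quasiprimitive module over a $p$-solvable group with $\bfO_p(G)=1$ and central $\bfO_{p'}(G)$ forces $G$ cyclic). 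You would need to restructure your induction along these lines for the argument to yield a function of $(p,s,d,m)$ only.
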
    
 
 The proof of the previous theorem shows that if the bound $C(p,s,m)$ we obtain for simple groups is independent of
 $p$, then the same is true for general $G$, that is, one can choose $h(p,s,d,m)$ to be independent of $p$.  
 
 Note that if $G$ is weakly $p$-solvable, then, one can choose the bound $C(p,s,m)$ to be $1$
 for simple groups with cyclic Sylow $p$-subgroups,
 and so Theorem \ref{main:p-solvable} is a consequence of Theorem \ref{thm:reduction}.  However,  a version
 of Theorem \ref{main:p-solvable} is used in the proof of Theorem \ref{thm:reduction}.
 
Furthermore, if $m=1$, such a bound $C(p,s,1)$ for simple groups 
 exists, see \cite[Theorem 1.4]{GT}.   However, it is still
 unknown (but rather unlikely) whether there exists an absolute constant $B$ (independent from $p$ and $s$) 
 so that $\dim H^1(S,V) < B$ for $S$ simple
 and $V$ irreducible (or equivalently if $S$ is any finite group and $V$ is irreducible and faithful).   See
 \cite{Gu1, Gu2, GH}.   

There is even less known for $m > 1$.  
 It is known \cite{CPS}  that for
$G$ a finite simple group of Lie type of bounded rank $s$ with $k$
the same characteristic $p$ as of $G$,  there is a bound
$\dim H^m(G,V) < C_1(p,s,m)$ for $V$ any absolutely irreducible $kG$-module.

\section{Preliminaries}

We use the standard notation as in \cite{Asbook}.  See \cite{Brown} for standard facts about cohomology.

We will use the following well known properties of cohomology. 
We fix a prime $p$ and an algebraically closed field $k$ of characteristic $p$. 
We first note the trivial result:

\begin{lemma} \label{trivial} Let $H$ be a finite group and $U=W_1 \otimes W_2$
a tensor product of $kH$-modules with $W_2$ irreducible.  Then 
$\dim H^0(H,U) \le (\dim W_1)/(\dim W_2)$.
\end{lemma}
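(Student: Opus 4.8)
The plan is to identify $H^0(H,U)=U^H$ with a space of homomorphisms and then bound its dimension by a multiplicity count. Since $W_2$ is finite-dimensional, there is a natural isomorphism of $kH$-modules
$$U = W_1 \otimes_k W_2 \cong \hom_k(W_2^*, W_1),$$
under which the diagonal $H$-action on the left corresponds to the conjugation action $(h\cdot f)(v) = hf(h^{-1}v)$ on the right. Taking $H$-fixed points then identifies $U^H$ with the space $\hom_{kH}(W_2^*, W_1)$ of $kH$-module homomorphisms, so it suffices to show $\dim \hom_{kH}(W_2^*,W_1) \le (\dim W_1)/(\dim W_2)$.

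Next I would use that $W_2$, and hence its dual $W_2^*$, is irreducible. Any nonzero homomorphism $W_2^* \to W_1$ is then injective, so its image is a simple submodule of $W_1$ isomorphic to $W_2^*$; in particular every such image lies in the socle $\soc(W_1)$. Writing $M$ for the $W_2^*$-isotypic component of $\soc(W_1)$, we have $M \cong (W_2^*)^{\op n}$ for some $n \ge 0$, and $\hom_{kH}(W_2^*,W_1) = \hom_{kH}(W_2^*, M)$. Because $k$ is algebraically closed, Schur's lemma gives $\End_{kH}(W_2^*)=k$, so $\dim \hom_{kH}(W_2^*, M) = n$.

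Finally I would compare dimensions: the submodule $M \cong (W_2^*)^{\op n}$ of $W_1$ has dimension $n \cdot \dim W_2^* = n \cdot \dim W_2$, which cannot exceed $\dim W_1$. Hence $n \le (\dim W_1)/(\dim W_2)$, and therefore $\dim H^0(H,U) = n \le (\dim W_1)/(\dim W_2)$, as claimed.

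As the statement itself signals, there is no genuine obstacle here: the only point requiring a little care is the $kH$-equivariance of the isomorphism $W_1 \otimes W_2 \cong \hom_k(W_2^*, W_1)$ and the consequent identification of fixed points with equivariant maps. Everything else is a direct application of Schur's lemma together with the socle-based dimension count above.
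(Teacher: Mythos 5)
Your proof is correct. The paper states this lemma without proof (labelling it "the trivial result"), and your argument — identifying $U^H$ with $\hom_{kH}(W_2^*,W_1)$ and bounding its dimension by the multiplicity of $W_2^*$ in the socle of $W_1$ via Schur's lemma — is exactly the standard argument the authors are implicitly invoking.
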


Applying Lemma \ref{trivial} to $U = D^* \otimes_k V$, this proves Theorem \ref{thm:reduction} for $\Ext^0$. 

We need the following result that follows from an easy spectral sequence
argument.  See \cite{Ho} or \cite[Lemma 3.7]{GKKL}. 

\begin{lemma} \label{holt}   Let $G$ be a finite group, $N$ a normal
subgroup of $G$ and $V$ a $kG$-module.  Then 
$\dim H^n(G,V) \le \sum_{i=0}^n  \dim H^i(G/N, H^{n-i}(N,V))$.
\end{lemma}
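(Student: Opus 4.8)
The plan is to deduce the bound directly from the Lyndon--Hochschild--Serre spectral sequence attached to the normal subgroup $N \lhd G$ with coefficients in $V$. This is the first-quadrant cohomological spectral sequence
$$E_2^{i,j} = H^i(G/N, H^j(N,V)) \Longrightarrow H^{i+j}(G,V),$$
in which $G/N$ acts on $H^j(N,V)$ in the standard way. Since $G$ is a finite group and $V$ is finite-dimensional over the field $k$, every cohomology group appearing here is finite-dimensional, so all the dimensions below are finite and the argument is purely formal.

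First I would recall what convergence of the spectral sequence gives: the abutment $H^n(G,V)$ carries a finite decreasing filtration whose successive quotients are precisely the terms $E_\infty^{i,j}$ with $i+j=n$. Counting dimensions across such a filtration yields the equality
$$\dim H^n(G,V) = \sum_{i=0}^n \dim E_\infty^{i,\,n-i}.$$
Next I would invoke the fact that each $E_\infty^{i,j}$ is a subquotient of $E_2^{i,j}$: passing from the $E_2$-page to the $E_\infty$-page only involves taking kernels of outgoing differentials and then quotients by images of incoming differentials, so $\dim E_\infty^{i,j} \le \dim E_2^{i,j} = \dim H^i(G/N, H^j(N,V))$. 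Substituting this term by term into the displayed equality gives
$$\dim H^n(G,V) = \sum_{i=0}^n \dim E_\infty^{i,\,n-i} \le \sum_{i=0}^n \dim H^i(G/N, H^{n-i}(N,V)),$$
which is exactly the asserted inequality.

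There is essentially no genuine obstacle here; the only points requiring care are the bookkeeping that the antidiagonal $i+j=n$ contributes the term $H^i(G/N, H^{n-i}(N,V))$, and the finite-dimensionality that legitimizes the dimension count. If one wished to avoid quoting the spectral sequence outright, the same estimate can be built up by induction on $n$ from the inflation--restriction (five-term) exact sequence, but the spectral sequence delivers it in a single line. Since the statement already attributes this to \cite{Ho} and \cite[Lemma 3.7]{GKKL}, I would in practice simply record the spectral-sequence argument above and cite those references.
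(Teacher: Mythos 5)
Your argument is correct and is exactly the ``easy spectral sequence argument'' the paper alludes to: the Lyndon--Hochschild--Serre spectral sequence, the dimension count over the filtration of $H^n(G,V)$ by the $E_\infty^{i,n-i}$, and the observation that each $E_\infty$-term is a subquotient of the corresponding $E_2$-term. The paper itself gives no proof and simply cites \cite{Ho} and \cite[Lemma 3.7]{GKKL} for this standard fact, so there is nothing to compare beyond noting that your write-up supplies the details those references contain.
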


The next result   follows by taking $kN$-resolutions for $V_1$ and tensoring
with $V_2$ (see also the proof of Lemma 2.4 in \cite{GT2}).

\begin{lemma} \label{tensor}  Let $G$ be a finite group with a normal subgroup $N$.
Let $V$ be an irreducible $kG$-module such that $V=V_1 \otimes V_2$ where 
the $V_i$ are irreducible, $N$ acts irreducibly on $V_1$ and $N$ acts trivially on $V_2$.   Then as a $G$-module, 
$H^m(N,V)) \cong H^m(N,V_1) \otimes V_2$.  
\end{lemma}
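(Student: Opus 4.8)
The plan is to prove Lemma \ref{tensor} by comparing a $kN$-projective resolution of $V_1$ with one of $V = V_1 \otimes_k V_2$, exploiting that $N$ acts trivially on $V_2$. First I would take a projective resolution $P_\bullet \to V_1$ of $kN$-modules. Since $V_2$ is a $k$-vector space on which $N$ acts trivially, tensoring over $k$ with $V_2$ is an exact functor, and each $P_i \otimes_k V_2$ is again a projective (indeed free, up to multiplicity $\dim V_2$) $kN$-module; hence $P_\bullet \otimes_k V_2 \to V_1 \otimes_k V_2 = V$ is a projective resolution of $V$ over $kN$. This gives the first, and conceptually easiest, step: a concrete resolution computing $H^*(N,V)$.

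Next I would compute $\hom_{kN}(P_i \otimes_k V_2, k)$ and identify it, naturally in $i$, with $\hom_{kN}(P_i,k) \otimes_k V_2^*{}^*$. Here the key point is the adjunction/tensor identity for modules over $kN$ with $N$ acting trivially on $V_2$: because $N$ fixes $V_2$ pointwise, $\hom_{kN}(P_i \otimes_k V_2, k) \cong \hom_{kN}(P_i, k) \otimes_k V_2$ as vector spaces (using $\hom_k(V_2,k) \cong V_2^*$ and that the trivial action makes the $N$-invariants factor through $V_2$). Passing to cohomology of the cochain complex then yields the vector space isomorphism $H^m(N,V) \cong H^m(N,V_1) \otimes_k V_2$.

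The main obstacle, and the part that actually requires care, is tracking the $G$-module structure on both sides rather than just the vector-space dimension. The group $G$ acts on $H^m(N,V)$ through the usual conjugation action on $N$-cohomology twisted by the action on coefficients, and one must verify that under the isomorphism above this $G$-action matches the diagonal $G$-action on $H^m(N,V_1) \otimes_k V_2$. To do this I would choose the resolution $P_\bullet$ and the identifications to be compatible with the $G$-action: since $N \lhd G$, conjugation by $g \in G$ sends a $kN$-resolution of $V_1$ to a resolution of $\tw{g}V_1$, and one checks that the isomorphisms intertwine the induced maps on cochains. Because $V = V_1 \otimes_k V_2$ is a $G$-module decomposition with $N$ acting only on the first factor, the $G$-action on $V_2$ simply rides along as the tensor factor, and the naturality established in the previous step upgrades the vector-space isomorphism to a $G$-isomorphism.

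I would finish by remarking that the argument is exactly the standard one alluded to in the statement (tensoring a $kN$-resolution of $V_1$ with $V_2$, as in the proof of Lemma 2.4 of \cite{GT2}), so the only genuine work is the bookkeeping of the $G$-equivariance; once the resolution is chosen $G$-equivariantly, the conclusion $H^m(N,V) \cong H^m(N,V_1) \otimes V_2$ as $G$-modules follows formally.
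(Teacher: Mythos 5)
Your overall strategy --- tensor a $kN$-resolution with $V_2$ and track the $G$-action --- is exactly the one-line argument the paper intends for this lemma, and the equivariance bookkeeping you describe is the right thing to worry about. However, there is a variance error in the middle step that, as written, makes the computation land on the wrong cohomology group. Group cohomology $H^m(N,V)$ is $\Ext^m_{kN}(k,V)$; it is computed either from a projective resolution of the \emph{trivial} module $k$ (applying $\hom_{kN}(-,V)$) or from an injective coresolution of the coefficient module. What you build is a projective resolution $P_\bullet\otimes_k V_2\to V$ of $V$ itself and then apply $\hom_{kN}(-,k)$; the cohomology of that cochain complex is $\Ext^m_{kN}(V,k)\cong H^m(N,V^*)$, not $H^m(N,V)$. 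Moreover the adjunction you invoke gives $\hom_{kN}(P_i\otimes_k V_2,k)\cong\hom_{kN}(P_i,k)\otimes_k V_2^*$, with a dual $V_2^*$ as the tensor factor, and identifying $V_2^*$ with $V_2$ ``as vector spaces'' is not permissible here, since the entire content of the lemma is the $G$-module structure and $V_2^*\not\cong V_2$ as $G$-modules in general. So the statement your argument actually establishes is $\Ext^m_{kN}(V,k)\cong\Ext^m_{kN}(V_1,k)\otimes V_2^*$, i.e. the lemma applied to $V^*=V_1^*\otimes V_2^*$.

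The repair is routine. Either take a $G$-equivariant projective resolution $Q_\bullet\to k$ of the trivial $kN$-module (e.g. the bar resolution) and use $\hom_{kN}(Q_i,V_1\otimes V_2)\cong\hom_{kN}(Q_i,V_1)\otimes V_2$, which holds because $N$ acts trivially on $V_2$ so taking $N$-invariants commutes with tensoring by that factor; or, since $kN$ is self-injective, take an injective coresolution $V_1\to I^\bullet$, observe that $V\to I^\bullet\otimes V_2$ is again an injective coresolution, and compute $(I^i\otimes V_2)^N=(I^i)^N\otimes V_2$. Either route yields $H^m(N,V)\cong H^m(N,V_1)\otimes V_2$ as $G$-modules directly, with exactly the naturality argument you outline; alternatively you may keep your computation verbatim and dualize at the end. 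The gap is thus a fixable duality slip rather than a wrong idea, but as written the key identification is false as a statement about $H^m(N,V)$.
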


We next show that the existence of a bound for the cohomology groups for simple groups
implies a bound for $\Ext$ groups for quasisimple groups.

\begin{lemma} \label{coh2ext}  There exists a function $f:\N^2 \to \N$ such that the following statement holds.
Let $G$ be a quasisimple group of sectional rank $p$-rank $s$,  $V$ an irreducible
$kG-module$ and $D$ a $kG$-module of dimension $d$.     Then 
$\dim \Ext^m_G(D,V) \le f(d,s) \dim H^m(G,V)$.
\end{lemma}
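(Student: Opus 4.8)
The plan is to rewrite the Ext-group as the cohomology of a tensor product, use the centre of $G$ to cut down which composition factors can contribute, and then bound those contributions; the one genuinely hard point is to make the bound uniform in $\dim V$. By the tensor--hom adjunction, $\Ext^m_G(D,V)\cong H^m(G,\hom_k(D,V))=H^m(G,D^*\otimes V)$, so the left-hand side is the cohomology of the single module $D^*\otimes V$, of dimension $d\cdot\dim V$. Taking a composition series of $D$ of length at most $d$ and running it through the long exact sequence of $\Ext^\bullet_G(-,V)$, I may assume $D$, and hence $D^*$, is irreducible, at the cost of a factor $d$ in the eventual function $f$.

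Next I would exploit the centre $Z=Z(G)=Z_p\times Z_{p'}$. Since $p\nmid|Z_{p'}|$, the Lyndon--Hochschild--Serre spectral sequence for $Z_{p'}\lhd G$ collapses and gives $H^m(G,M)=H^m(G/Z_{p'},M^{Z_{p'}})$ for every module $M$; as $V$ affords a single $Z_{p'}$-character, every composition factor $W$ of $D^*\otimes V$ with $H^m(G,W)\ne0$ is one on which $Z_{p'}$ acts trivially, which already confines the relevant $W$ to a single central character. The residual central $p$-group $Z_p$ acts trivially on the irreducible module $V$ (a central $p$-element acts as a $p$-power root of unity, hence as $1$), so the projection formula behind Lemma \ref{tensor} untwists its cohomology, $H^{j}(Z_p,D^*\otimes V)\cong H^{j}(Z_p,D^*)\otimes V$; feeding this into Lemma \ref{holt} for $Z_p\lhd G$, and bounding $\dim H^{j}(Z_p,D^*)$ by Theorem \ref{symonds}, lets me pass to the simple quotient $G/Z$ with auxiliary coefficient modules whose dimension stays bounded in terms of $d$ and $s$.

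For the surviving contributions I would compare composition factors, using $\dim H^m(G,D^*\otimes V)\le\sum_W [D^*\otimes V:W]\,\dim H^m(G,W)$, the sum taken over the irreducible $W$ singled out above. The crucial accounting is the identity $[D^*\otimes V:W]=\dim\hom_G(P_W,D^*\otimes V)=\dim\hom_G(P_W\otimes D,V)$, where $P_W$ is the projective cover of $W$: it shows that every contributing $W$ is tied to $V$ through tensoring by the bounded module $D$, with the relevant multiplicities estimated by Lemma \ref{trivial} and every cohomology dimension estimated by Theorem \ref{symonds}.

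The main obstacle, and the step I expect to require the real work, is to turn this into a bound uniform in $\dim V$. A priori $D^*\otimes V$ has of order $d\cdot\dim V$ composition factors, far too many for the crude sum above to be useful; the substance of the lemma is that only boundedly many of them are cohomologically nonzero and that their combined cohomology does not exceed $f(d,s)\dim H^m(G,V)$. Establishing this — controlling simultaneously the number of relevant constituents $W$ and the size of their cohomology by a function of $d$ and $s$ alone, uniformly over all groups of sectional $p$-rank at most $s$ — is precisely where the rank hypothesis enters, through Symonds' theorem; the reductions in the previous paragraphs are essentially formal.
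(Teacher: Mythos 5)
There is a genuine gap, and you have in effect flagged it yourself: your final paragraph concedes that the step which would actually prove the inequality --- controlling the cohomologically relevant constituents of $D^*\otimes V$ and tying their contribution to $\dim H^m(G,V)$ --- is still missing. The preceding reductions (adjunction, composition series in $D$, splitting off the central characters) are correct but formal, and the place you expect the missing idea to come from is not where it lives. Symonds' theorem only gives $\dim H^m(G,D^*\otimes V)\le C(m,s)\,d\,\dim V$, a bound linear in $\dim V$ that cannot be converted into $f(d,s)\dim H^m(G,V)$; your inequality $\dim H^m(G,D^*\otimes V)\le\sum_W [D^*\otimes V:W]\dim H^m(G,W)$ has on the order of $d\dim V$ terms and, even after the central-character filtering, you never relate $\dim H^m(G,W)$ for a constituent $W$ back to $\dim H^m(G,V)$ for the given $V$. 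Nothing in your argument uses that $G$ is quasisimple, and without that hypothesis the statement is simply false (take $D$ nontrivial irreducible and $V$ chosen so that $H^m(G,V)=0$ but $\Ext^m_G(D,V)\ne 0$), so a purely homological argument of the kind you sketch cannot close the gap.

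The paper's proof is short and rests on an entirely different, representation-theoretic point: lower bounds on the minimal degree of a nontrivial representation of a quasisimple group. Bounded sectional $p$-rank leaves only boundedly many quasisimple groups that are sporadic or of Lie type in characteristic $p$; among covers of alternating groups, a nontrivial module of dimension $\le d$ forces $n\le\max(d+2,9)$; and for Lie type in cross characteristic the Landazuri--Seitz--Zalesskii bounds likewise leave only a bounded-in-$d$ list. Excluding this bounded-in-$(d,s)$ collection of groups, every remaining quasisimple $G$ has no nontrivial irreducible $kG$-module of dimension $\le d$, so $G$ acts trivially on $D$ and $\dim\Ext^m_G(D,V)=d\cdot\dim H^m(G,V)$ exactly. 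The content of the lemma is thus the degree dichotomy for quasisimple groups, not a cohomological estimate; you would need to import that input (or an equivalent) to complete your argument.
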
 

\begin{proof} 
Note that, up to isomorphism, there exists a bounded in $s$ number of quasisimple groups whose non-abelian 
composition factor is either a sporadic group, or a Lie type in characteristic $p$. Trivially, 
there exists a bounded in $d$ quasisimple covers of an alternating group ${\mathsf {A}}_n$ that admit a 
nontrivial representation of degree $\leq d$ -- indeed, under such assumptions we must have $n \leq \max(d+2,9)$.
Likewise, the Landazuri-Seitz-Zalesskii bound \cite{LS} implies that there exists a bounded in $d$ quasisimple covers of a simple group 
of Lie type in characteristic $\neq p$ that admit a 
nontrivial representation over $k$ of degree $\leq d$.
Now, with $d$ and $s$ fixed, we may exclude a bounded in $d,s$ number of quasisimple groups, and therefore assume that $G$ is a 
quasisimple group of Lie type
in some characteristic $r \ne p$ and that $d$ is less than the dimension of any nontrivial irreducible $kG$-module.
In that case $G$  acts trivially on $D$, and so $\dim \Ext^m_G(D,V) = d \cdot \dim H^m(G,V)$.
\end{proof}  

Next note that a special case of Lemma \ref{holt} is the following:

\begin{lemma} \label{p'cohomology}   Let  $G$ be a finite group, $V$ a finite dimensional $kG$-module
and $N$ a normal $p'$-subgroup of $G$.  If $H^0(N,V)=0$, then $H^j(G,V)=0$ for all $j$.
\end{lemma}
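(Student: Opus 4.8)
The plan is to apply Lemma~\ref{holt} directly and then exploit the fact that, since $N$ is a $p'$-subgroup in characteristic $p$, the group algebra $kN$ is semisimple. First I would invoke Lemma~\ref{holt} with the given normal subgroup $N$ and with $n = j$, which yields
$$\dim H^j(G,V) \le \sum_{i=0}^j \dim H^i\bigl(G/N, H^{j-i}(N,V)\bigr).$$

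Next I would observe that because $|N|$ is invertible in $k$ (as $N$ is a $p'$-group and $\operatorname{char} k = p$), Maschke's theorem makes $kN$ semisimple, so that $H^{j-i}(N,V) = 0$ whenever $j - i > 0$. Consequently, the only term that can survive in the sum is the one with $i = j$, namely $\dim H^j\bigl(G/N, H^0(N,V)\bigr)$. Since $H^0(N,V) = V^N = 0$ by hypothesis, this is the cohomology of $G/N$ with coefficients in the zero module and hence vanishes. Therefore $\dim H^j(G,V) = 0$ for every $j$, as claimed.

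I expect no genuine obstacle here: the statement is essentially a formal consequence of the spectral-sequence bound of Lemma~\ref{holt} combined with the vanishing of positive-degree cohomology of a $p'$-group in characteristic $p$. The only point meriting attention is the index bookkeeping, namely verifying that the single surviving coefficient module $H^{j-i}(N,V)$ occurs precisely at $j - i = 0$, where it coincides with the invariants $V^N$ that the hypothesis forces to be trivial.
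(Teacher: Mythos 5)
Your proposal is correct and is exactly the argument the paper intends: the paper states the lemma as ``a special case of Lemma~\ref{holt}'', and your derivation — semisimplicity of $kN$ killing $H^{j-i}(N,V)$ for $j-i>0$, and the hypothesis $H^0(N,V)=V^N=0$ killing the remaining $i=j$ term — is the straightforward way to see this. No gaps.
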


We will also need the following result.   If $p$ is a prime, let $R_p(G)$ denote the maximal $p$-solvable normal
subgroup of $G$ and $R(G)$ the solvable radical of $G$, $F(G)$ the Fitting subgroup of $G$, and $E(G)$ the {\it layer} of $G$ (that is, the product of components of $G$), so that $F^*(G)=E(G)F(G)$.

\begin{lemma}  \label{boundedmode}  Let $G$ be a finite group with $\bfO_p(G)=1$ and $\bfO_{p'}(G)=\bfZ(G)$.   
Assume that the number of components of $G$ is at most $t$.   If $H:=G/E(G)$,  then 
$|H/R(H)|$ divides $t!$. 
\end{lemma}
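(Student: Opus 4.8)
The plan is to combine the permutation action of $G$ on its components with the solvability of the outer automorphism groups of finite simple groups (Schreier's conjecture).

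First I would pin down the generalized Fitting subgroup. Since $\bfO_p(G)=1$, every $\bfO_q(G)$ with $q\neq p$ lies in $\bfO_{p'}(G)=\bfZ(G)$, so $F(G)\le \bfZ(G)$; as $\bfZ(G)$ is normal and nilpotent, $F(G)=\bfZ(G)$. Hence $F^*(G)=E(G)F(G)=E(G)\bfZ(G)$, and because $\bfZ(G)$ is central we get $C:=C_G(E(G))=C_G(F^*(G))=\bfZ(F^*(G))$, which is abelian. Note also that $C\le N$ (where $N$ is introduced next), since centralizing $E(G)$ forces normalizing each component.

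Next I would exploit the conjugation action of $G$ on the set $\{E_1,\dots,E_r\}$ of components, where $r\le t$. This gives a homomorphism $G\to \Sym_r$ into the symmetric group on $r$ letters, whose kernel $N$ (the subgroup stabilizing each component) is normal in $G$ and contains $E(G)$, since distinct components commute. Thus $|G/N|$ divides $r!$, and hence divides $t!$.

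The heart of the argument is to show that $N/E(G)$ is solvable. Conjugation gives maps $\psi_i:N\to \mathrm{Aut}(E_i)$ for each $i$, and composing with the projections to $\Out(E_i)$ yields $\bar\psi:N\to \prod_{i=1}^r \Out(E_i)$. I would verify that $\ker\bar\psi=CE(G)$: an element $n$ induces an inner automorphism on every $E_i$ precisely when, after multiplying $n$ by a suitable $e=e_1\cdots e_r\in E(G)$ realizing these inner automorphisms, the product $ne^{-1}$ centralizes each $E_i$, i.e. lies in $C$. The target $\prod_i\Out(E_i)$ is solvable by Schreier's conjecture, so $N/CE(G)$ is solvable; and $CE(G)/E(G)$, being a quotient of the abelian group $C$, is abelian. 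Hence $N/E(G)$ is solvable.

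Finally, $N/E(G)$ is a solvable normal subgroup of $H=G/E(G)$, so $N/E(G)\le R(H)$. Therefore $H/R(H)$ is a quotient of $H/(N/E(G))\cong G/N$, whence $|H/R(H)|$ divides $|G/N|$, which divides $t!$. The main obstacle I anticipate is the kernel computation $\ker\bar\psi=CE(G)$, which requires care with the central product structure of $E(G)$, together with the essential use of Schreier's conjecture to obtain solvability of $\prod_i\Out(E_i)$.
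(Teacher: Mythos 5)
Your proposal is correct and follows essentially the same route as the paper: identify $F(G)=\bfZ(G)$ so that $\bfC_G(E(G))=\bfC_G(F^*(G))$ is small, pass to the permutation action on the components, use the Schreier conjecture to make the component-wise stabilizer solvable modulo $E(G)$, and embed the quotient in $\Sym_t$. Your version merely spells out the kernel computation $\ker\bar\psi = CE(G)$ that the paper leaves implicit.
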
 

\begin{proof}  
By the assumptions, $F(G) = \bfZ(G)$. Hence $G$ acts on $E(G)$, with centralizer $\bfC_G(E(G))=\bfC_G(F(G)E(G)) \leq \bfZ(G)E(G)$. 
It follows that $G/\bfZ(G)E(G)$ embeds into the outer automorphism group of $E(G)$, which permutes the $t$ components in $E(G)$.
By the Schreier conjecture, the kernel of the latter permutation action is solvable, and so its inverse image $K$ in $H$ is contained in 
$R(H)$. As $H/K$ embeds in ${\mathsf {Sym}}_t$, the result follows. 
\end{proof}

\section{Some Reductions}

Fix a prime $p$ and an algebraically closed field $k$ of characteristic $p$.   All modules for a finite group
$G$ will be (finite-dimensional) $kG$-modules. 

We will prove Theorem \ref{thm:reduction} by double induction, first on $m$ and then on $d$  and also by proving it in some special cases first.
Thus, we assume that the theorem holds for any smaller than $m$ cohomology degree (for arbitrary $d$) and in the given degree $m$
for any dimension $< d$, and also that it holds in the previous special cases.

The special families of groups we consider are (still with the assumption of bounded sectional $p$-rank):
\begin{enumerate}[\rm(A)]
\item $|G|$ is bounded;  
\item  $G$ is $p$-solvable; and
\item  $|G/R(G)|$ is bounded.    
\end{enumerate}

These three cases do not depend on the corresponding result for simple groups and so the
statement of the theorem in these cases is unconditional.  The first two cases are used in the proof
of the third case, and (C) is used in the general case. 

We first note if $|G| \leq M$ is bounded, the result is clear:   we may assume that $D$ is irreducible
and so there only finitely many possibilities for $G$, $D$ and $V$, whence there is a bound
depending only on $m$ and $M$.    

There are a few global reductions that we use. 

\noindent
{\it Step 1:  We may assume that $D$ is irreducible and $V$ is primitive}. 

\begin{proof} 
The first claim follows from the sub-additivity of the dimension of cohomology groups. 
Next, if $V$ is imprimitive, then there $V=\Ind_H^G(W)$ for some subgroup $H$ with $s_p(H) \leq s_p(G) \leq s$ and a 
primitive irreducible $kH$-module $W$.
Then by Shapiro's Lemma, $\dim \Ext_G^m(D,V) = \dim \Ext_H^m(D,W)$, and the result holds. 
\end{proof} 

\noindent 
{\it Step 2:  We may assume that if $K$ is the kernel of  $G$ on $V$, then $K/\bfO_p(G)$ is a
$p'$-group.}  

\begin{proof} Let $Q$ be a Sylow $p$-subgroup of $K$.   By the Frattini argument, $G=\bfN_G(Q)K$ and so
$H:=\bfN_G(Q)$ acts irreducibly (and primitively, by previous step) on $V$.    Since
$H$ contains a Sylow $p$-subgroup of $G$,  $\dim \Ext_G^m(D,V) \le \dim \Ext_H^m(D,V)$.
Thus, we may assume that $Q$ is normal in $G$. But $\bfO_p(G) \leq K$, so we conclude $Q=\bfO_p(G)$ and so $K/\bfO_p(G)$ 
is a $p'$-group.
\end{proof} 

\noindent
{\it Step 3:   We may assume that the kernel $K$ of $G$ on $V$ is a $p'$-group.} 

\begin{proof}  
By the previous step, $K/\bfO_p(G)$ is a $p'$-group.   By 
Lemma \ref{holt} and Lemma \ref{tensor} we see that 
\begin{align}
\dim H^m(G, D^* \otimes V)  & \le \sum_{j=0}^m \dim H^j(G/\bfO_p(G), H^{m-j}(\bfO_p(G), D^* \otimes V))\\
& =  \sum_{j=0}^m \dim H^j(G/\bfO_p(G), H^{m-j}(\bfO_p(G), k) \otimes D^* \otimes V)).
\end{align} 
\noindent
The term with $j=m$ is $H^m(G/\bfO_p(G), D^* \otimes V)$.   For all the other terms we obtain a
bound by induction and the result follows. 
\end{proof} 

\noindent 
{\it Step 4:  Let $K$ be the kernel of $G$ on $V$ (which we may assume by the previous step to be a $p'$-group).
We may assume that there is a $1$-dimensional representation $\rho$ of $\bfO_{p'}(G)$
such that $\bfO_{p'}(G)$ acts via $\rho$ on both $V$ and $D$.   In particular,  $\bfO_{p'}(G)/K = \bfZ(G/K)$.}

\begin{proof}  
Since the irreducible $G$-module $V$ is primitive by Step 1, the restriction of $V$ to $\bfO_{p'}(G)$ is homogeneous, i.e. 
a direct sum of copies of the same irreducible $\bfO_{p'}(G)$-module $U$. 
If $U$ is not a summand of $D|_{\bfO_{p'}(G)}$, then
$\bfO_{p'}(G)$ has no fixed points on $D^* \otimes V$ and so all the cohomology groups are trivial, and the result follows.
In the remaining case, since $U$ is $G$-invariant and $D$ is irreducible, it follows that $D|_{\bfO_{p'}(G)}$ is also
a direct sum of copies of $U$.
Passing to a universal $p'$-cover of $G$, which does not change its sectional $p$-rank, we may write 
$V \cong W \otimes X$, where $W$ is an irreducible $kG$-module
$W|_{\bfO_{p'}(G)} \cong U$ and $X$ is an irreducible $kG$-module on which 
$\bfO_{p'}(G)$ acts trivially (see e.g. \cite[Theorem (51.7)]{CR}).  In turn, with $W$ fixed, we can also write   
write $D \cong W \otimes Y$ where $Y$ is an irreducible $kG$-module on which  
$\bfO_{p'}(G)$ acts trivially. 

Now we have  
$$H^j(G,D^* \otimes V)
= H^j(G,  (W^* \otimes W) \otimes Y^* \otimes X).$$  
Since $W|_{\bfO_{p'}(G)}$ is irreducible, we can write 
$W^* \otimes W = k \oplus  U$, where $\bfO_{p'}(G)$ has acts fixed-point-freely on $U$, and this decomposition is $G$-equivariant. 
As $\bfO_{p'}(G)$ acts trivially on both $X$ and $Y$,  it has no fixed points on $U \otimes Y^* \otimes X$,
whence all $H^j(G, U \otimes Y^* \otimes X)=0$ and so $H^j(G, D^* \otimes V) \cong H^j(G, Y^* \otimes X)$.

If $\dim W > 1$, the result follows by induction on $d$.  If $\dim W = 1$, then the conclusion holds
and since $W$ extends to a $kG$-module, the last statement holds as well. 
\end{proof}  

\section{Completion of the Proof}

We keep notation as in the previous section. 

Let us first note that the result for $p$-solvable groups holds by the reductions in the previous case.
We have reduced to the case where $V$ is faithful and $\bfO_{p'}(G)$ is central.  If $G$ is $p$-solvable,
this implies that $F^*(G)=\bfZ(G)$ is cyclic. Hence, $G = \bfC_{G}(F^*(G)) \leq F^*(G)$ is also is cyclic, and the result
follows in case (B). 

\smallskip
Next we show that the result holds in case (C) where $|G/R(G)|$ is bounded. Indeed, by Lemma \ref{holt}, 
$$
\dim H^m(G, D^* \otimes V)  \le \sum_{j=0}^m \dim H^j(G/R(G), H^{m-j}(R(G), D^* \otimes V)).
$$
Now, Lemma \ref{tensor} and the results for bounded groups  (applied to $G/R(G)$) and solvable groups (applied to
$R(G)$) imply the statement for $G$. 

\smallskip
We next prove the theorem in the case that the kernel of $G$ on $V$ is a $p'$-group
and $G = E(G)\bfZ(G)$ under the assumption that we have bounds on $\dim H^j(S,W)$
for $S$ a simple group of bounded sectional $p$-rank and $W$ an irreducible $kS$-module.

\begin{thm}\label{layer}  
Suppose there exists a constant $C_1=C_1(p, j, s)$
such that 
$$\dim H^j(S,W) \le  C_1$$ 
for every finite simple group $S$ of sectional $p$-rank at most $s$.
Then there exists a constant $C_2=C_2(p,j,d,s)$ such that the following statement holds.  
Let $G=E(G)\bfZ(G)$ be any finite group of sectional $p$-rank at most $s$ and with
$\bfZ(G)$ being a $p'$-group.    Let $V$ be any irreducible $kG$-module and let 
$D$ be a $kG$-module of dimension at most $d$. Then 
$$\dim \Ext_G^j(D,V) \leq C_2.$$  
\end{thm}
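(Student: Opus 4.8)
The plan is to reduce the structure of $G = E(G)\bfZ(G)$ to a tractable quotient and then apply the hypothesis on simple groups together with the earlier reduction machinery. First I would dispose of the central $p'$-part. Since $\bfZ(G)$ is a $p'$-group acting on the irreducible module $V$, by Schur's lemma it acts by a scalar (a one-dimensional representation $\rho$), and by the arguments already used in Step 4 we may assume $\bfZ(G)$ acts via the same $\rho$ on both $V$ and $D$ (otherwise $\bfZ(G)$ has no fixed points on $D^* \otimes V$ and all cohomology vanishes by Lemma \ref{p'cohomology}). Thus $\Ext_G^j(D,V)$ is governed by the action of $\bar G := G/\bfZ(G)$, which is a product of simple groups (the images of the components), each of sectional $p$-rank at most $s$.

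The key step is to handle $V$ as a tensor product over the components. Write $E(G) = E_1 \cdots E_t$ as a central product of components, where $t$ is bounded because the sectional $p$-rank is bounded (each component contributes at least one to $s_p$, so $t \le s$). Since $V$ is irreducible and primitive (Step 1), its restriction to $E(G)$ decomposes, up to the central scalar, as an external tensor product $V \cong V_1 \boxtimes \cdots \boxtimes V_t$ of irreducible modules $V_i$ for the simple quotients $S_i$ of $E_i$, using Clifford theory for central products. Then I would apply Lemma \ref{holt} with the normal subgroup $E(G)$ (or iterate over the $E_i$) to write
$$
\dim H^j(G, D^* \otimes V) \le \sum_{i=0}^j \dim H^i\bigl(G/E(G),\, H^{j-i}(E(G), D^* \otimes V)\bigr).
$$
Because $G/E(G) = \bfZ(G)/(\bfZ(G) \cap E(G))$ is a $p'$-group, the outer cohomology $H^i(G/E(G), -)$ vanishes for $i > 0$, so the sum collapses to the invariants $H^0(G/E(G), H^j(E(G), D^* \otimes V))$, reducing everything to computing $H^j(E(G), D^* \otimes V)$.

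To bound the cohomology of the central product $E(G)$ I would use a Künneth-type argument: $H^j$ of a tensor product of modules over a central product of quasisimple groups splits as a sum over compositions $j = j_1 + \cdots + j_t$ of products $\bigotimes_i H^{j_i}(E_i, M_i)$, where each $M_i$ is the appropriate tensor factor coming from $D^* \otimes V$. Each $H^{j_i}(E_i, M_i)$ is, by Lemma \ref{coh2ext}, bounded by $f(d,s)\dim H^{j_i}(S_i, W_i)$ for the relevant irreducible $S_i$-module $W_i$, and this in turn is bounded by $C_1(p,j_i,s) \le C_1(p,j,s)$ by hypothesis. Since the number of terms (compositions of $j$ into $t \le s$ parts) is bounded in terms of $j$ and $s$, and each factor is bounded, the total is a bounded constant $C_2(p,j,d,s)$.

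The hard part will be the bookkeeping that turns $D^* \otimes V$ into a controlled external tensor product over the components and makes the Künneth decomposition rigorous: $D$ need not itself be irreducible or factor as a tensor product over the $E_i$, so I expect to first reduce to $D$ irreducible (by sub-additivity, as in Step 1), then argue that an irreducible $D$ of dimension at most $d$ whose restriction to $E(G)$ has a given component decomposition can only involve boundedly many distinct $S_i$-constituents, controlling how $D^*$ distributes across the factors. The interplay between the central scalar actions, the requirement that the outer $p'$-quotient contributes no higher cohomology, and the finiteness of the number of components is where the main care is needed; the appeal to Lemma \ref{coh2ext} and the simple-group hypothesis is then essentially immediate once the tensor structure is in place.
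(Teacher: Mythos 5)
Your proposal follows essentially the same route as the paper's own (very terse) proof: pass to a central $p'$-extension so that $G$ becomes a direct product of quasisimple groups with a central $p'$-subgroup, bound $\dim \Ext^j$ for each quasisimple factor via Lemma \ref{coh2ext} together with the hypothesis on simple groups, and assemble the bound with the K\"unneth formula --- exactly the skeleton you describe, with your use of Lemma \ref{holt} for the outer $p'$-quotient being just a repackaging of the same reduction. The one caveat is your claim that the number of components satisfies $t\le s$: components of order prime to $p$ are not detected by $s_p(G)$, so $t$ need not be bounded, but such components contribute only in cohomological degree $0$ (with bounded invariants by Lemma \ref{trivial}), so the K\"unneth sum still has boundedly many nonzero terms and your argument goes through after this small repair.
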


\begin{proof}  By passing to a central $p'$-extension, we may assume that $G$ is a direct product of
quasisimple groups and a central subgroup.   By Lemma \ref{coh2ext}, the result holds for 
quasisimple groups and then for $G$ by the K\"unneth formula.
\end{proof}

The general case in Theorem \ref{thm:reduction} now follows by the same argument. By the reductions in \S3 may assume that 
$G$ acts faithfully on $V$ and $\bfO_{p'}(G)$ is central and acts the same on both $D$ and $V$.  
Let $N=E(G)\bfO_{p'}(K)$.   Then 
$$
\dim H^m(G, D^* \otimes V)  \le \sum_{j=0}^m \dim H^j(G/N, H^{m-j}(N, D^* \otimes V)).
$$

By Lemma \ref{tensor}  as a $G$-module, $H^{m-j}(N, D^* \otimes V) = H^{m-j}(N, D^* \otimes W) \otimes X$.
By Theorem \ref{layer},  there is a bound on $\dim H^{m-j}(N, D^* \otimes W)$ and then we can apply
the result for $G/N$ (which is bounded modulo a solvable normal subgroup by Lemma \ref{boundedmode}).

\end{document}